\newtheorem{Theorem}{Theorem}
\newtheorem{Lemma}[Theorem]{Lemma}
\newtheorem{Proposition}[Theorem]{Proposition}
\newtheorem{Corollary}[Theorem]{Corollary}
\newtheorem{Remark}[Theorem]{Remark}
\newtheorem{Definition}[Theorem]{Definition}
\title[Bohr Property]{Bohr property of bases in the space of entire
functions and its generalizations}
\author{Aydin Aytuna}
\address{Sabanci University, Orhanli,
34956 Tuzla, Istanbul, Turkey}
 \email{aytuna@sabanciuniv.edu}
\author{Plamen Djakov}
\begin{document}
\address{Sabanci University, Orhanli,
34956 Tuzla, Istanbul, Turkey}
 \email{djakov@sabanciuniv.edu}

\subjclass[2000]{32A05, 46E10, 32A15, 32Q28}

\keywords{Bohr property, bases, entire functions, Stein manifolds}

\dedicatory{Dedicated to Tosun Terzio\u{g}lu on the occasion of his
seventieth birthday}

\begin{abstract}
We prove that if $(\varphi_n)_{n=0}^\infty, \; \varphi_0 \equiv 1, $
is a basis in the space of entire functions of $d$ complex variables,
$d\geq 1,$  then for every compact $K\subset \mathbb{C}^d$ there is a
compact $K_1 \supset K$ such that for every entire function $f=
\sum_{n=0}^\infty f_n \varphi_n$ we have $\sum_{n=0}^\infty |f_n|\,
\sup_{K}|\varphi_n| \leq \sup_{K_1} |f|.$ A similar assertion holds
for bases in the space of global analytic functions on a Stein
manifold with the Liouville Property.
\end{abstract}

\maketitle

\section{Introduction}
In 1914 H. Bohr \cite{B}  proved that if $f= \sum c_{n}z^{n}$ is a
bounded analytic function  on the unit disc $U \subset \mathbb{C},$
then
$$
\sum_{n=0}^\infty | c_{n}| r^n  \leq \sup_{z\in U} |f(z)|
$$
for every  $0 \leq r \leq \frac{1}{6}.$  The largest $r\leq 1$ such
that the above inequality holds is referred to as the {\em Bohr
radius}, $\kappa_1, $ for the unit disc. The exact value of
$\kappa_1$ was computed, by M. Riesz, I. Schur and N. Wiener, to be
$\frac{1}{3}.$

In 1997 H. P. Boas and D. Khavinson \cite{BK} showed that a similar
phenomenon occurs for polydiscs in $\mathbb{C}^d.$ If we let $U^{d}$
denote the unit polydisc in $\mathbb{C}^{d},$ the largest number $r$
such that if $\left | \sum_{\alpha } c_{\alpha } z^{\alpha } \right |
<1$ for all\ $z \in U^{d},$ then $\sum_{\alpha } \left | c_{\alpha
}z^{\alpha }\right | <1$ holds for the homothetic domain $ r\,U^{d},$
is referred to as the {\em Bohr radius}, $\kappa_{d},$ for the unit
polydisc $U^{d}.$ Boas and Khavinson obtained upper and lower bounds
for $\kappa_{d},$ in terms of $d,$ and showed that $\kappa_{d}
\rightarrow 0$ as $ d \rightarrow \infty .$ However the {\em exact}
value of $\kappa_{d} $ is still not known. Recently  A. Defant, L.
Frerick, J. Ortega-Cerd\`{a}, M. Ouna\"{\i}es and K. Seip
\cite{DFOOS} showed that $\kappa_{d}$ behaves asymptotically as
$\sqrt{\frac{\log d}{d}},$ modulo a factor bounded away from zero and
infinity. Other multidimensional variants of Bohr's phenomenon were
given by L. Aizenberg~\cite{Aiz00}. He proved Bohr-type theorems for
bounded complete Reinhardt domains and obtained estimates for the
corresponding Bohr radii.

P. G. Dixon \cite{Dix}  has used Bohr's original theorem to construct
a Banach algebra which is not an operator algebra, yet satisfies the
non-unital von Neumann's inequality. V. Paulsen, G. Popescu and D.
Singh \cite{PPS} have applied operator-theoretic techniques to obtain
refinements and multidimensional generalizations of Bohr's
inequality.

Interesting interconnections among multidimensional Bohr radii, local
Banach space theory and complex analysis in infinite number of
variables established in \cite{DGM} and \cite{DT} triggered a further
wave of investigations. For this line of research and recent related
references we refer the reader to the survey \cite{DP}.

Ramifications and extensions of Bohr-type theorems also attracted
attention. Various authors studied versions of Bohr phenomena in
different settings. See for example \cite{Aiz05}, \cite{BB},
\cite{DR}, \cite{G}.

In \cite{AAD00} and \cite{AAD01} we (along with L. Aizenberg) took a
more abstract approach to Bohr phenomena and considered general bases
in the space of global analytic functions on complex manifolds rather
than monomials. For a complex manifold $\mathcal{M},$  a given basis
$\left ( \varphi_{n} \right)_{n=0}^\infty $ in the space
$H(\mathcal{M})$ of global analytic functions is said to have the
{\em Bohr Property} $\left( BP\right) $ if there exist an open set $U
\subset \mathcal{M}$ and a compact set $K \subset \mathcal{M}$ such
that, for every  $f=\sum c_{n}\varphi_{n}$ in $H(\mathcal{M}),$ the
inequality
\begin{equation}
\label{0} \sum | c_{n} | \sup_{U} |\varphi _{n} (z)| \leq \sup_{K}
|f(z)|
\end{equation}
 is valid. In \cite[Theorem 3.3]{AAD01} it is shown that a
basis $\left( \varphi _{n}\right) _{n=0}^{\infty }$ has $BP$
 if $\varphi _{0}=1$ and there is a point
$z_0 \in \mathcal{M}$ such that $\varphi_{n} (z_0) =0,$ $n=1,2,
\ldots. $

Let us note that Theorem 3.3 in \cite{AAD01} has a {\em local
character}, namely in fact it proves that for any compact
neighborhood $K\ni z_0$ there is an open set $U$ with $z_0 \in U
\subset K $ such that (\ref{0}) holds. Moreover, its proof is based
on considering sets $U$ that "shrink" to $z_0.$

Recently P. Lass\`{e}re and E. Mazzilli \cite{LM1} (see also
\cite{KS} and \cite{LM2}) have studied the Bohr phenomenon for the
Faber polynomial basis $\left( \psi_{n}\right)_{n=0}^\infty $
associated to a continuum in $\mathbb{C}.$  By using Theorem 3.3 in
\cite{AAD01} and some properties of Faber polynomials they proved
that for {\em every } relatively compact $ W \subset \mathbb{C}$
there is a compact $K \subset \mathbb{C}$ such that, for every entire
function $f=\sum c_{n}\psi_{n},$
$$
\sum | c_{n}| \sup_{W} |\psi_{n} (z)| \leq \sup_{K} |f(z)|.
$$
Let us note that the latter assertion has a {\em global character}.

In this paper we give a characterization of the bases that possess
{\em global } $BP$  in the above sense, for a class of complex
manifolds which contains $\mathbb{C}^{d}, 1\leq d<\infty $ and more
generally parabolic Stein manifolds (see \cite{AKT,AS}).  Our results
extend and generalize the above mentioned theorem of P. Lass\`{e}re
and E. Mazzilli  \cite[Theorem 3.1]{LM1}. See more comments on their
results in Section~3, after Remark~\ref{rem4}.

We recall some basic definitions and facts and get preliminary
results in Section 2. In Section~3 we prove that the Global Bohr
Property takes place for every basis $(\varphi_n)_{n=0}^\infty $ in
the space of entire functions $H(\mathbb{C}^d)$ provided one of the
functions $\varphi_n$  is a constant. In Section~4 we generalize the
results from Section~3 for Stein manifolds with Liouville Property,
i.e., for manifolds with the property that every bounded analytic
function reduces to a constant.

\section{Preliminaries}
Let $D \subset \mathbb{C}^d$  be a domain in $\mathbb{C}^d$ (or, $D
\subset \mathcal{M},$ where $\mathcal{M}$ is a Stein manifold). We
denote by $H(D)$ the space of analytic functions on $D.$ For any
compact subset $K \subset D $ we set
$$ |f|_{K} \;: = \; \sup_{K} |f(z)| , \quad f \in H(D). $$
Further we write $K \subset \subset D$ if $K$ is a compact subset of
$D.$  The system of seminorms $|f|_{K}, \; K \subset \subset D, $
defines the topology of uniform convergence on compact subsets of
$D.$ Regarded with it $H(D)$ is a nuclear Fr\'echet space (e.g.
\cite{MV}).

Recall that a sequence $(\varphi_n )_{n=0}^{\infty} $ of analytic
functions on $D $ is called {\em basis} in $H(D) $ if for every
function $f \in H(D)$ there exists a unique sequence of complex
numbers $f_n $ such that
$$f \; = \; \sum_{n=0}^\infty  f_n \varphi_n , $$
where the series converges uniformly on any compact subset of $D.$

We will use the following well known fact.

\begin{Proposition}
\label{abs} If $(\varphi_n  )_{n=0}^{\infty} $ is a basis in  $H(D)$
then for every $K_1 \subset \subset D $  there exist $K_2 \subset
\subset D $ and $C>0 $ such that
$$\text{if} \;\; f  =  \sum f_n \varphi_n \quad \text{then} \quad
  \sum |f_n | |\varphi_n |_{K_1} \;
\leq \;  C|f|_{K_2} .$$
\end{Proposition}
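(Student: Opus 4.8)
The plan is to derive this from the continuity of the coordinate functionals together with the nuclearity (or at least the Montel property) of $H(D)$, following the standard argument that an unconditional-type estimate is automatic for bases in such spaces. First I would recall that since $(\varphi_n)$ is a basis, each coefficient functional $f \mapsto f_n$ is linear, and by the closed graph theorem (valid because $H(D)$ is a Fr\'echet space) each is continuous; moreover the partial sum operators $S_N f = \sum_{n=0}^{N} f_n \varphi_n$ are equicontinuous, since $S_N f \to f$ for every $f$ and $H(D)$ is barrelled (Banach–Steinhaus). Equicontinuity gives: for each $K_1 \subset\subset D$ there are $K' \subset\subset D$ and $C'>0$ with $|S_N f|_{K_1} \le C' |f|_{K'}$ for all $N$ and all $f$; letting $N \to \infty$ one also gets $|f|_{K_1} \le C'|f|_{K'}$, but more importantly the tails are controlled.

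Next I would pass from the operator estimate to the absolutely convergent one. Fix $K_1 \subset\subset D$. Choose an intermediate compact set $K_1 \subset\subset L \subset\subset D$ (using that $D$ is a domain in $\mathbb{C}^d$ or a Stein manifold, such an exhaustion exists), and apply the equicontinuity estimate at the level of $L$: there are $K_2 \subset\subset D$ and $C>0$ so that $|S_N f - S_{N-1} f|_{L} = |f_n \varphi_n|_{L} \le C\,|f|_{K_2}$ is not yet what we want — instead the cleaner route is to estimate $|f_n|$ directly: applying the continuous coefficient functional and the Cauchy-type inequality, $|f_n|\,|\varphi_n|_{K_1}$ can be bounded by comparing $|\varphi_n|_{K_1}$ with $|\varphi_n|_{L}$ using the nuclearity of the restriction map $H(L) \hookrightarrow H(K_1)$, whose singular numbers decay rapidly; combined with a uniform bound $|f_n| \le M |f|_{K_2}$ on each coefficient (from continuity of the $n$-th coefficient functional, made uniform in $n$ by the basis property expressed via the nuclear Köthe-space representation of $H(D)$), summing a convergent geometric-type series yields $\sum_n |f_n|\,|\varphi_n|_{K_1} \le C |f|_{K_2}$.

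The main obstacle is making the bound on the coefficients $|f_n|$ uniform in $n$ with the right gain against $|\varphi_n|_{K_1}$: a single continuous functional gives $|f_n| \le M_n |f|_{K_2}$ with $M_n$ a priori depending on $n$, and one must exploit the nuclear Fr\'echet structure of $H(D)$ — concretely, represent $H(D)$ as a nuclear Köthe sequence space in the coordinates $(f_n)$ induced by the basis, so that the system of seminorms is equivalent to weighted $\ell^1$-norms $\sum_n |f_n| a_n(K)$ with the matrix $(a_n(K))$ satisfying the Grothendieck–Pietsch nuclearity condition; the estimate $a_n(K_1) \asymp |\varphi_n|_{K_1}$ then makes the desired inequality the comparison $\sum |f_n| a_n(K_1) \le C \sup_n (a_n(K_1)/a_n(K_2)) \cdot \sup_n |f_n| a_n(K_2)/(\text{something}) $, which is exactly the kind of inequality nuclearity provides once $K_2$ is chosen large enough relative to $K_1$. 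I would carry out this reduction to Köthe-space form first, then the summation is routine.
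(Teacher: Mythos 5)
Your proposal correctly identifies what this proposition is: it is precisely the Dynin--Mityagin theorem that every Schauder basis of a nuclear Fr\'echet space is absolute, and the paper does not prove it at all --- it simply cites \cite{DM60}, \cite{M} and \cite[Theorem 28.12]{MV}. Your first paragraph is sound and standard: completeness of $H(D)$ gives continuity of the coefficient functionals and, via Banach--Steinhaus, equicontinuity of the projections $f\mapsto f_n\varphi_n$, hence a uniform bound $\sup_n |f_n|\,|\varphi_n|_{K_1}\le C'\,|f|_{K'}$. So far, no nuclearity is needed and nothing is wrong.

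The genuine gap is in the step that upgrades this $\sup$-over-$n$ bound to the summed bound, which is where the entire content of the theorem lies, and your treatment of it is circular. You propose to ``represent $H(D)$ as a nuclear K\"othe sequence space in the coordinates $(f_n)$ induced by the basis, so that the system of seminorms is equivalent to weighted $\ell^1$-norms $\sum_n|f_n|a_n(K)$ with $a_n(K)\asymp|\varphi_n|_K$'' --- but that equivalence \emph{is} the statement that the basis is absolute, i.e.\ exactly the proposition to be proved. The same circularity affects your appeal to the summability $\sum_n |\varphi_n|_{K_1}/|\varphi_n|_{K_2}<\infty$: the Grothendieck--Pietsch criterion in the form you need it applies to the K\"othe matrix $a_n(K)=|\varphi_n|_K$ only after one knows that $H(D)$ is isomorphic to the K\"othe space built on that matrix via the given basis (this is how the paper itself derives its Lemma~\ref{lem1}, \emph{using} Proposition~\ref{abs}); knowing abstractly that $H(D)$ is isomorphic to \emph{some} nuclear K\"othe space does not transfer the condition to the coordinates of an arbitrary basis. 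The displayed chain of inequalities in your last paragraph is also not a well-formed estimate. A non-circular argument requires the actual Dynin--Mityagin machinery (nuclear factorizations of the linking maps $H(K_2)\to H(K_1)$ played against the biorthogonal functionals of the basis, applied more than once); since that is a known theorem, the correct and economical fix is to do what the paper does and cite it.
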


The assertion follows from the theorem of Dynin and Mityagin on
absoluteness of bases in nuclear spaces (\cite{DM60,M}, e.g.
\cite{MV}, Theorem 28.12).

\begin{Lemma}
\label{lem1} Let $(\varphi_n  )_{n=0}^{\infty} $ be a basis in
$H(\mathbb{C}^d), \; d\geq 1,$ such that $\varphi_0 (z) \equiv 1.$
Then for every compact $K \subset \mathbb{C}^d$ we have
\begin{equation}
\label{11} \inf_{K_1 \subset \subset \mathbb{C}^d } \sum_{n=1}^\infty
\frac{|\varphi_n|_K}{|\varphi_n|_{K_1}} =0.
\end{equation}
\end{Lemma}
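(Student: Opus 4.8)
The idea is to exploit the freedom in choosing $K_1$ by letting it be a large ball $B_R = \{z : \|z\| \le R\}$ and tracking how fast $|\varphi_n|_{B_R}$ grows with $R$ compared to the fixed quantity $|\varphi_n|_K$. Fix a compact $K$ and choose $R_0$ with $K \subset B_{R_0}$. By Proposition~\ref{abs}, applied with $K_1 := B_{R_0}$, there exist a ball $B_{R_1}$ (enlarging if necessary) and $C>0$ such that $\sum_n |f_n|\,|\varphi_n|_{B_{R_0}} \le C|f|_{B_{R_1}}$ for every $f = \sum f_n\varphi_n$; in particular the series $\sum_n |\varphi_n|_{B_{R_0}}/|\varphi_n|_{B_{R_1}}$ has meaning and we would like to show the analogous quantity tends to $0$ as we let the outer ball grow. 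The key point: because $\varphi_0 \equiv 1$, the expansion of the constant function has $f_0 = 1$ and $f_n = 0$ for $n \ge 1$, which is consistent; but more usefully, applying a dilation is not available, so instead I would chase quantitative Cauchy-type estimates.

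The plan is as follows. First, recall that for each fixed $n$, $|\varphi_n|_K$ is a fixed finite number, and $|\varphi_n|_{B_R} \to \infty$ as $R \to \infty$ unless $\varphi_n$ is constant — and a basis can contain at most one constant (namely a multiple of $\varphi_0$, by uniqueness of expansions), so for $n \ge 1$ the ratio $|\varphi_n|_K/|\varphi_n|_{B_R} \to 0$ pointwise in $n$. Pointwise decay of each term is not enough for the sum; the real work is a uniform tail estimate. For this I would use Proposition~\ref{abs} twice to get a chain $B_{R_0} \subset B_{R_1} \subset B_{R_2}$ with constants: $\sum_n |f_n|\,|\varphi_n|_{B_{R_0}} \le C_1 |f|_{B_{R_1}}$ and $\sum_n |f_n|\,|\varphi_n|_{B_{R_1}} \le C_2 |f|_{B_{R_2}}$. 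Feeding a cleverly chosen test function (for instance, taking $f$ with a single large coefficient $f_N$, i.e. $f = \varphi_N$, so $f_N = 1$, $f_n = 0$ otherwise) into these inequalities gives $|\varphi_N|_{B_{R_0}} \le C_1 |\varphi_N|_{B_{R_1}}$ — a reverse inequality, which is the wrong direction. So instead the right move is to estimate $\sum_{n \ge 1}|\varphi_n|_K/|\varphi_n|_{B_R}$ directly by constructing, for each large $R$, an explicit auxiliary analytic function whose Bohr-type sum isolates that tail.

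Concretely, here is the mechanism I expect to work. Fix $\varepsilon > 0$. By Proposition~\ref{abs} there is a ball $B_\rho \supset K$ and $C$ with $\sum_n |f_n|\,|\varphi_n|_K \le \sum_n|f_n|\,|\varphi_n|_{B_\rho}\cdot(\text{factor}) \le C|f|_{B_{\rho'}}$; the point is that the constant $C$ and the ball $B_{\rho'}$ depend only on $K$, not on the outer ball we will choose. Now for the outer ball take $R$ enormous. Since each $|\varphi_n|_{B_R} \to \infty$ (for $n\ge 1$) while the first several $|\varphi_n|_K$ are fixed, choose $R$ so large that the finite head $\sum_{n=1}^{M}|\varphi_n|_K/|\varphi_n|_{B_R} < \varepsilon/2$ for a cutoff $M$ to be determined. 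For the tail $\sum_{n>M}$, use the absoluteness inequality in the form $\sum_{n>M}|f_n|\,|\varphi_n|_K \le C|f|_{B_{\rho'}}$ with the specific choice $f_n = |\varphi_n|_{B_R}^{-1}\,\operatorname{sign}(\cdots)$ truncated — more precisely, pick $f = \sum_{n>M} \lambda_n \varphi_n$ with $|\lambda_n| = 1/|\varphi_n|_{B_R}$; then $\|f\|_{B_R} \le \sum_{n>M}|\lambda_n|\,|\varphi_n|_{B_R}$, which need not be finite, so one must instead apply absoluteness with a different outer compact to bound $|f|_{B_{\rho'}}$ by something like $\sum_{n>M}|\lambda_n|\,|\varphi_n|_{B_{\rho'}}$. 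Choosing the $\lambda_n$ to decay fast enough (e.g. $\lambda_n = 2^{-n}/|\varphi_n|_{B_R}$) makes this convergent; then $\sum_{n>M} 2^{-n}\,|\varphi_n|_K/|\varphi_n|_{B_R} \le C\sum_{n>M}2^{-n}\,|\varphi_n|_{B_{\rho'}}/|\varphi_n|_{B_R}$, and since $B_{\rho'}$ is fixed and $R$ is huge, the right side is small — but the $2^{-n}$ on the left is the wrong weight. Removing that weight is the crux. The clean route: apply Proposition~\ref{abs} with $K_1 = B_{\rho'}$ to obtain $B_{\rho''}$ and $C'$ with $\sum_n|g_n|\,|\varphi_n|_{B_{\rho'}} \le C'|g|_{B_{\rho''}}$; then for $g = \sum_{n>M}\mu_n\varphi_n$ with $|\mu_n| = 1/|\varphi_n|_{B_R}$ and $R$ chosen (depending on $M$) so large that $|\varphi_n|_{B_{\rho''}}/|\varphi_n|_{B_R} < 2^{-n}$ for all $n \le$ some large bound, one telescopes down. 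I would iterate Proposition~\ref{abs} finitely many times, each step trading a factor of the outer ball, and at the last step use that for $R$ sufficiently large the ratios $|\varphi_n|_{(\text{fixed ball})}/|\varphi_n|_{B_R}$ are as small as we like on any finite range, while on the infinite tail they are controlled by the summability guaranteed by absoluteness. Combining the head estimate and the tail estimate yields $\sum_{n\ge1}|\varphi_n|_K/|\varphi_n|_{B_R} < \varepsilon$ for suitable $R = R(\varepsilon)$, which is exactly \eqref{11}.

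The main obstacle, as the above groping shows, is the \emph{direction} of the inequalities: Proposition~\ref{abs} controls $\sum|f_n|\,|\varphi_n|_{\text{small}}$ by $|f|_{\text{large}}$, whereas we want to bound a sum of ratios $|\varphi_n|_{\text{small}}/|\varphi_n|_{\text{large}}$, which is not literally of that form. Bridging this requires choosing the test coefficients $f_n$ to be (essentially) $1/|\varphi_n|_{K_1}$ and then re-estimating $|f|_{K_1}$ itself by another application of absoluteness with a still larger compact — i.e. a two- or three-fold chaining of Proposition~\ref{abs}, together with the elementary but essential observation that $|\varphi_n|_{B_R} \to \infty$ for every non-constant $\varphi_n$ (so that, after fixing all the auxiliary compacts, a single final choice of a huge ball $B_R$ drives everything to zero). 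Making the bookkeeping of "which ball depends on which parameter" consistent — the auxiliary compacts must be fixed \emph{before} $R$ is chosen — is the delicate point; once the quantifier order is right, the estimate falls out.
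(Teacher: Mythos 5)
Your overall architecture (pointwise decay of each ratio because every $\varphi_n$, $n\ge 1$, is non-constant hence unbounded, plus a head/tail split in which a fixed summable majorant controls the tail and a final huge ball kills the finite head) is exactly the right shape, and it is the shape of the paper's proof. But the mechanism you propose for producing the tail majorant does not work, and this is a genuine gap rather than a bookkeeping issue. Proposition~\ref{abs} (absoluteness of the basis) only bounds $\sum_n |f_n|\,|\varphi_n|_{K_1}$ for an \emph{actual} $f\in H(\mathbb{C}^d)$; your test functions with coefficients $\lambda_n=1/|\varphi_n|_{B_R}$, or even $\lambda_n=2^{-n}/|\varphi_n|_{B_R}$, need not define entire functions. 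Already for the monomial basis in $H(\mathbb{C})$ one has $|z^n|_{B_R}=R^n$, so $\sum_n 2^{-n}z^n/R^n$ converges only on $|z|<2R$; hence the claim that the $2^{-n}$ weight ``makes this convergent'' is false, and no iteration of Proposition~\ref{abs} applied to such functions is legitimate. More fundamentally, absoluteness of a basis does not imply summability of the ratios $|\varphi_n|_{K}/|\varphi_n|_{K_1}$ for any $K_1$ (the unit vector basis of $\ell^1$ is absolute and has all these ratios equal to $1$), so ``the summability guaranteed by absoluteness'' is not available: the needed summability is a consequence of \emph{nuclearity} of $H(\mathbb{C}^d)$, not of absoluteness.

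The missing ingredient is the Grothendieck--Pietsch criterion (e.g.\ \cite{MV}, Theorem 28.15): since $H(\mathbb{C}^d)$ is nuclear and $(\varphi_n)$ is an absolute basis, for every compact $K$ there is a compact $K_1$ with $\sum_{n}|\varphi_n|_{K}/|\varphi_n|_{K_1}<\infty$, which is precisely the statement (\ref{13}) in the paper. Once you have this single fixed $K_1$ with a convergent series, your head/tail argument closes immediately: given $\varepsilon>0$, choose $N$ so that the tail over $K_1$ is less than $\varepsilon/2$; then, using that $|\varphi_n|_{K_2}\ge|\varphi_n|_{K_1}$ for any $K_2\supset K_1$ (so the tail only shrinks when $K_1$ is enlarged), pick $K_2\supset K_1$ large enough that the $N$ head terms are each less than $\varepsilon/(2N)$, which is possible by the pointwise decay you correctly established. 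You should replace the entire test-function/chaining discussion with a direct appeal to nuclearity; without it, the tail estimate has no source.
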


\begin{proof}
Since $\varphi_0 (z) \equiv 1,$  we have that $\varphi_n \neq const $
for $n\in \mathbb{N}.$ Therefore, each of the function $\varphi_n, \;
n \in \mathbb{N} $ is unbounded. This implies that
\begin{equation}
\label{12} \inf_{K_1 \subset \subset \mathbb{C}^d }
\frac{|\varphi_n|_K}{|\varphi_n|_{K_1}} =0, \quad \forall n \in
\mathbb{N}.
\end{equation}
On the other hand, by Grothendieck-Pietsch criterion for nuclearity
(e.g. \cite{MV}, Theorem 28.15) we have
\begin{equation}
\label{13} \forall \, K \subset \subset \mathbb{C}^d \quad \exists \,
K_1 \subset \subset \mathbb{C}^d  \; : \quad \sum_0^\infty
{|\varphi_n|_{K}}/{|\varphi_n|_{K_1}} \; < \, \infty.
\end{equation}
Now (\ref{11}) follows immediately from (\ref{12}) and (\ref{13}).

\end{proof}

\section{Bohr property of bases in the space of entire functions}

 We say that a basis $(\varphi_n )_{n=0}^{\infty} $ in the
space of entire functions $H(\mathbb{C}^d)$ has the {\em Global Bohr
Property} ($GBP$) if for every compact $K  \subset \mathbb{C}^d$
there is a compact $K_1 \supset K$ such that
\begin{equation}
\label{e2} \text{if} \;\;  f=\sum_0^\infty f_n \varphi_n, \;\;
\text{then} \;\; \sum_0^\infty |f_n | |\varphi_n |_K \; \leq \;
|f|_{K_1} \quad \forall f \in H(\mathbb{C}^d).
\end{equation}

\begin{Theorem}
\label{thm1}  A basis $(\varphi_n)_{n=0}^\infty$  in the space of
entire functions $H(\mathbb{C}^d)$  has $GBP$ if and only if one of
the functions $\varphi_n$ is a constant.
\end{Theorem}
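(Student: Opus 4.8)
The plan is to prove the two implications separately. For the easy direction, suppose $GBP$ holds but no $\varphi_n$ is a constant. Then every $\varphi_n$ is a non-constant entire function, hence unbounded; in particular $\varphi_0$ is unbounded, so there is no constant function in the closed linear span of the basis — but the constant function $1$ lies in $H(\mathbb{C}^d)$ and must have a basis expansion, which already forces one of the $\varphi_n$ to be constant unless... actually the cleaner route is: if $GBP$ holds, apply \eqref{e2} to $f = \varphi_0$ itself. Then $\sum_n |\delta_{n0}|\,|\varphi_n|_K = |\varphi_0|_K \le |\varphi_0|_{K_1}$, which is no contradiction. So instead I would argue: fix a compact $K$ with nonempty interior and take the corresponding $K_1$ from $GBP$. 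For each fixed $m$, apply \eqref{e2} to $f = c\,\varphi_m$ for arbitrary scalar $c$; this gives $|c|\,|\varphi_m|_K \le |c|\,|\varphi_m|_{K_1}$, again harmless. The real leverage comes from choosing $f$ cleverly so that the left side sees the full basis norm while the right side stays controlled — this is exactly the mechanism of Lemma~\ref{lem1}. So the honest statement is that the ``only if'' direction is itself the substantive part, and I expect it to be the main obstacle.

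For the ``if'' direction, assume some $\varphi_m$ is constant. Since $\varphi_0 \equiv 1$ appears as the stated normalization in the abstract and in Lemma~\ref{lem1}, and since a basis can contain at most one constant (two constants would be linearly dependent, contradicting that a basis is in particular linearly independent), we may assume after relabeling that $\varphi_0$ is the constant function and, rescaling, $\varphi_0 \equiv 1$; then all other $\varphi_n$ are non-constant and hence unbounded. Fix a compact $K \subset \mathbb{C}^d$. By Proposition~\ref{abs} there are $K_2 \supset K$ and $C > 0$ with $\sum_n |f_n|\,|\varphi_n|_K \le C\,|f|_{K_2}$ for all $f = \sum f_n \varphi_n$. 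The task is to upgrade the constant $C$ to $1$ by enlarging the compact set. The key idea I would use is a scaling/averaging trick exploiting the constant term: given $f = \sum f_n \varphi_n$, split off $f_0$ and note $f - f_0 = \sum_{n\ge 1} f_n \varphi_n$ vanishes in a controlled way. More precisely, for a parameter $t > 0$ consider the function $g_t(z) := f(z) $ restricted to a large compact set, or better, use Lemma~\ref{lem1}: choose $K_1 \supset K$ so large that $\sum_{n\ge 1} |\varphi_n|_K / |\varphi_n|_{K_1} \le \varepsilon$ for a prescribed small $\varepsilon$. Then I would try to bound $\sum_{n\ge 0}|f_n|\,|\varphi_n|_K$ by $|f_0|\cdot 1 + \sum_{n\ge1} |f_n|\,|\varphi_n|_{K_1}\cdot \varepsilon$, and control $|f_0|$ and $\sum_{n\ge1}|f_n|\,|\varphi_n|_{K_1}$ separately in terms of $|f|_{K_1'}$ for a still larger compact set via Proposition~\ref{abs} applied at the level $K_1$.

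The technical heart is therefore a two-scale argument: apply the absolute-basis estimate once at an intermediate scale to get $\sum_{n\ge 1}|f_n|\,|\varphi_n|_{K_1} \le C' |f|_{K_1'}$, use $|f_0| = |f - \sum_{n\ge1}f_n\varphi_n|$ evaluated suitably, or extract $f_0$ as a continuous linear functional with $|f_0| \le C'' |f|_{K_1'}$, and then balance: $\sum_{n\ge0}|f_n|\,|\varphi_n|_K \le |f_0| + \varepsilon \sum_{n\ge1}|f_n|\,|\varphi_n|_{K_1}$. The subtlety is that $|f_0|$ is \emph{not} obviously $\le |f|_{K_1}$ — a coefficient functional generally has norm larger than $1$ — so one cannot naively get the clean constant $1$. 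I expect the resolution to require an additional homothety: replace $f(z)$ by $f(\lambda z)$ or translate, using that $\mathbb{C}^d$ (or the parabolic Stein manifold) admits automorphisms or exhaustions along which $|f_0|$, being the value of a fixed bounded functional, can be compared to sup norms on shrinking-then-expanding sets while the non-constant $\varphi_n$ blow up. Pinning down this balancing act — getting constant exactly $1$ rather than $1+o(1)$, and handling the $f_0$ term — is where I anticipate the bulk of the work; a clean way out may be to first prove \eqref{e2} with a constant $1+\varepsilon$ for every $\varepsilon>0$ and every $K$ (with $K_1$ depending on $\varepsilon$), then absorb the $\varepsilon$ by enlarging $K_1$ one final time using that $\varepsilon$ was arbitrary.
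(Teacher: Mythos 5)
Your proposal is a plan with genuine gaps in both directions, and in each case the missing step is a specific idea rather than routine detail. For the necessity direction, you walk up to the right argument and then abandon it: the point is to apply \eqref{e2} not to a basis element but to the constant function. Write $1=\sum_n c_n\varphi_n$; some $c_{n_0}\neq 0$, and \eqref{e2} with $f\equiv 1$ gives $|c_{n_0}|\,|\varphi_{n_0}|_K\leq |1|_{K_1}=1$ for \emph{every} compact $K$, so $\varphi_{n_0}$ is a bounded entire function and hence constant by Liouville. This direction is the easy one, not ``the main obstacle''; testing \eqref{e2} on $f=\varphi_0$ or $f=c\varphi_m$, as you do, cannot work because it never activates the summation over the whole basis.

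For sufficiency you correctly isolate the crux --- upgrading the constant $C$ from Proposition~\ref{abs} to exactly $1$ --- but you do not resolve it, and your proposed escape (prove the inequality with constant $1+\varepsilon$ and ``absorb the $\varepsilon$ by enlarging $K_1$'') fails: the compact $K_1(\varepsilon)$ grows as $\varepsilon\to 0$ and there is no single compact set to pass to in the limit. The missing tool is the Borel--Carath\'eodory theorem. Normalize $f(0)\geq 0$ by a unimodular factor, pass to the basis $\psi_0\equiv 1$, $\psi_n=\varphi_n-\varphi_n(0)$, and use Proposition~\ref{abs} to get $f(0)+\sum_{n\geq 1}|f_n|\,|\psi_n|_{B(r)}\leq f(0)+C|f-f(0)|_{B(r_1)}$; Borel--Carath\'eodory bounds $|f-f(0)|_{B(r_1)}$ by $\frac{2r_1}{r_2-r_1}\sup_{B(r_2)}(\operatorname{Re}f(z)-f(0))$, and the choice $r_2=(2C+1)r_1$ makes the product of constants exactly $1$, so the whole expression is at most $\sup_{B(r_2)}\operatorname{Re}f\leq |f|_{B(r_2)}$ --- constant $1$ with no limiting argument. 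Returning from the $\psi_n$ to the $\varphi_n$ is also not free: one needs the Schwarz lemma, $|\psi_n|_{B(r)}\leq\frac{1}{3}|\psi_n|_{B(3r)}$, together with Lemma~\ref{lem1} applied at the point $0$ (choose $\tilde r\geq 3r$ with $|\varphi_n(0)|\leq\frac{1}{4}|\varphi_n|_{B(\tilde r)}$, hence $|\psi_n|_{B(\tilde r)}\geq 3|\varphi_n(0)|$) to absorb the extra $\sum_n|f_n|\,|\varphi_n(0)|$ terms; your plan invokes Lemma~\ref{lem1} in a different role and without the Schwarz-lemma contraction, so the bookkeeping as you describe it does not close.
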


\begin{proof}
Let $(\varphi_n)_{n=0}^\infty$ be a basis in  $H(\mathbb{C}^d)$ which
has $GBP$. If $ 1= \sum_{n=0}^\infty c_n \varphi_n (z) $ is the
expansion of the constant function $1,$ then at least one of the
coefficients $c_n $ is nonzero, say $c_{n_0} \neq 0. $ By (\ref{e2}),
it follows that for every $K\subset \subset \mathbb{C}^d$ there is
$K_1 \subset \subset \mathbb{C}^d$ such that
$$
|c_{n_0}| |\varphi_{n_0}|_K \leq  |1|_{K_1}=1.
$$
Hence $|\varphi_{n_0} (z)| \leq 1/|c_{n_0}|, $ i.e., $\varphi_{n_0}
(z)$ is a bounded entire function, so it is a constant by the
Liouville theorem. (The necessity assertion follows also from the
argument that proves Proposition~3.1 in \cite{AAD01}.)

Suppose that  $(\varphi_n )_{n=0}^{\infty} $ is a basis in
$H(\mathbb{C}^d) $ such that one of the functions $\varphi_n $ is a
constant, say
\begin{equation}
\label{e1} \varphi_0 (z) \equiv 1.
\end{equation}
Let $B(r) = \{z=(z_1, \ldots, z_d)\in \mathbb{C}^d: \; \sum_1^d
|z_k|^2 \leq r^2\}. $ It is enough to prove that for every $r>0 $
there is $R>r $ such that
\begin{equation}
\label{e3} \text{if} \;\;  f=\sum_0^\infty f_n \varphi_n, \;\;
\text{then} \;\; \sum_0^\infty |f_n | |\varphi_n |_{B(r)} \; \leq \;
|f|_{B(R)} \quad \forall f \in H(\mathbb{C}^d).
\end{equation}

One can easily see that the system
\begin{equation}
\label{e4}  \psi_0 (z) \equiv 1, \quad \psi_n (z) = \varphi_n (z) -
\varphi_n (0), \quad n\in \mathbb{N},
\end{equation}
is also a basis in $H(\mathbb{C}^d).$ Moreover, if $f =
\sum_{n=0}^\infty f_n \varphi_n, $ then we have $f(0) =
\sum_{n=0}^\infty f_n \varphi_n (0),$ which implies that $f= f(0) +
\sum_{n=1}^\infty f_n \psi_n.$

First we show that for every $r>0$ there is $R>r $ such that
\begin{equation}
\label{e5} \text{if} \;\;  f= f(0) +  \sum_1^\infty f_n \psi_n, \;
\text{then} \; |f(0)| + \sum_1^\infty |f_n | |\psi_n |_{B(r)} \; \leq
\; |f|_{B(R)} \;\; \forall f \in H(\mathbb{C}^d).
\end{equation}
Fix $r>0 $ and a function $f \in H(\mathbb{C}^d).$ We may assume
without loss of generality that $f(0)\geq 0$ (otherwise one may
multiply $f$ by $ |f(0)|/f(0)$). By Proposition~\ref{abs}, there are
$C > 0 $ and  $r_1 > r $ (which do not depend on $f$) such that
$$
f(0) + \sum_1^\infty |f_n | |\psi_n |_{B(r)} \; \leq \; f(0) + C
|f-f(0)|_{B(r_1)}.
$$
Now, for any $r_2 > r_1,$ the Borel - Carath\'eodory theorem (see
\cite{T}) says that
$$|f-f(0)|_{B(r_1)} \leq \frac{2r_1}{r_2 - r_1} \sup_{B(r_2)} (Re f(z) - f(0) ).
$$
Let $r_2 = (2C+1) r_1.$ Then $\frac{2Cr_1}{r_2 - r_1} = 1$, so we
obtain
$$
f(0) + \sum_1^\infty |f_n | |\psi_n |_{B(r)} \leq f(0) +
\frac{2Cr_1}{r_2-r_1} \sup_{B(r_2)} (Re f(z) - f(0) ) \leq
\sup_{B(r_2)} Re f(z),  $$ i.e., (\ref{e5}) holds with $ R=(2C+1)
r_1.$

Next we prove (\ref{e3}). Since $\varphi_0 \equiv 1,$ we have
$$
f(0)= f_0 + \sum_{n=1}^\infty f_n \varphi_n (0) \quad  \Rightarrow
\quad |f_0| \leq |f(0)| + \sum_{n=1}^\infty |f_n| |\varphi_n (0)|,$$
so it follows that
$$
\sum_0^\infty |f_n | |\varphi_n |_{B(r)} \leq |f_0| + \sum_1^\infty
|f_n | |\varphi_n -\varphi_n (0) |_{B(r)} +\sum_{n=1}^\infty |f_n|
|\varphi_n (0)|
$$
$$
\leq |f(0)| + \sum_1^\infty |f_n | |\varphi_n -\varphi_n (0) |_{B(r)}
+ 2\sum_{n=1}^\infty |f_n| |\varphi_n (0)|.
$$
Applying the Schwartz Lemma, we obtain
\begin{equation}
\label{e8} \sum_0^\infty |f_n | |\varphi_n |_{B(r)} \leq |f(0)| +
\frac{1}{3}\sum_1^\infty |f_n | |\varphi_n - \varphi_n (0) |_{B(3r)}
+ 2\sum_{n=1}^\infty |f_n| |\varphi_n (0)|.
\end{equation}
On the other hand, by Lemma~\ref{lem1}, there is $\tilde{r} \geq 3r $
such that
$$
|\varphi_n (0)|/|\varphi_n |_{B(\tilde{r})} \leq    \frac{1}{4},
\quad \forall n \in \mathbb{N}.
$$
Therefore, we have
\begin{equation}
\label{e9} |\varphi_n - \varphi_n (0) |_{B(\tilde{r})} \geq
|\varphi_n|_{B(\tilde{r})} -|\varphi_n (0)| \geq 3|\varphi_n (0)|,
\quad \forall n \in \mathbb{N}.
\end{equation}
Since $\tilde{r} \geq 3r, $  (\ref{e8}) and (\ref{e9}) imply that
$$
\sum_0^\infty |f_n | |\varphi_n |_{B(r)} \leq |f(0)| + \sum_1^\infty
|f_n | |\varphi_n -\varphi_n (0) |_{B(\tilde{r})}.
$$
Hence, from (\ref{e4}) and (\ref{e5}) it follows that (\ref{e3})
holds. This completes the proof.

\end{proof}

\begin{Remark}
\label{rem4} Suppose that $(\varphi_n)_{n=0}^\infty, \; \varphi_0
\equiv 1, $ is a basis in the space of entire functions
$H(\mathbb{C}^d) $ and $K \subset K_1 $ are compact sets such that
for every entire function $h= \sum_{n=0}^\infty h_n \varphi_n $ we
have
\begin{equation}
\label{r1} \sum_{n=0}^\infty |h_n| |\varphi_n|_K \leq |h|_{K_1}.
\end{equation}
If a domain $G \supset K_1 $ has the property that $(\varphi_n)$ is a
basis in the space $H(G)$ of holomorphic functions on $G,$ then for
every bounded function $f \in H(G)$ with $f= \sum f_n \varphi_n $ we
have
\begin{equation}
\label{r2} \sum_{n=0}^\infty |f_n| |\varphi_n|_K \leq \sup_{G}
|f(z)|.
\end{equation}
\end{Remark}

\begin{proof}
If $f$ is a bounded holomorphic function on $G,$ then $$f=
\sum_{n=0}^\infty f_n \varphi_n,$$ where the series converges
uniformly on every compact subset of G, so in particular it converges
uniformly on $K_1.$

Let $(S_m)$ be the sequence of partial sums of the expansion of $f.$
Then (\ref{r1}) holds for $S_m,$ i.e.,
$$
\sum_{n=0}^m |f_n| |\varphi_n |_K  \leq \sup_{K_1} |S_m (z)|,
$$
and  $S_m \to f$ uniformly on $K_1.$ Therefore, it follows that
$$
\sum_{n=0}^\infty |f_n| |\varphi_n |_K  \leq \sup_{K_1} |f(z)|  \leq
\sup_{G} |f(z)|,
$$
which completes the proof.
\end{proof}

Some of the bases encountered for the space of entire functions are
also common bases for the spaces of analytic functions corresponding
to an exhaustive one parameter family of sub domains $\mathcal{D}_r$
(usually sub level domains of an associated plurisubharmonic
function). So if such a basis, say $(\varphi_n),$  has $GBP$ then for
every compact set $K \subset \mathbb{C}^d$, there exits a domain
$\mathcal{D}_r$ such that (\ref{r2}) holds with $G=\mathcal{D}_r, $
i.e.,
\begin{equation}
\label{r4}   \forall K \subset \subset \mathbb{C}^d \;\; \exists r:
\quad \sum_{n=0}^\infty |c_n| |\varphi_n|_K \leq \sup_{D_r} |f(z)|.
\end{equation}

Recently, a partial case of this situation have been considered by P.
Lass\`{e}re and E. Mazzilli \cite{LM1}. For a fixed continuum $K
\subset \mathbb{C},$ they studied the Bohr phenomenon for Faber
polynomial basis $(F_{K,n})_{n=0}^\infty$ in $H(\mathbb{C})$ (which
is a common basis for $H(\mathcal{D}_r),$ where $\mathcal{D}_r$  are
the sub level domains of the Green function of the complement of $K$
with pole at infinity). They established Bohr's phenomenon in the
form (\ref{r4}), i.e., there exists $r>0$ such that, for every
bounded function $f \in H(\mathcal{D}_r)$ with $f= \sum c_n F_{K,n},$
$$
\sum_{n=0}^\infty |c_n| |F_{K,n}|_K  \leq \sup_{D_r} |f(z)|
$$
holds.  P. Lass\`{e}re and E. Mazzilli called the infimum $R_0$ of
such $r$ {\em Bohr radius} associated to the family $(K,
\mathcal{D}_r, (F_{K,n})_{n=0}^\infty)$  (see also \cite{LM2}).

The definition of P. Lass\`{e}re and E. Mazzilli could be extended to
the general situation mentioned above. Namely, if
$(\varphi_n)_{n=0}^\infty, \; \varphi_0 \equiv 1, $  is a basis in
the space of entire functions $H(\mathbb{C}^d)$ such that it is a
common basis for the spaces $H(\mathcal{D}_r),$  where
$(\mathcal{D}_r)$ is an exhaustive family of domains in
$\mathbb{C}^d,$ then by Theorem~\ref{thm1} and Remark~\ref{rem4} it
follows that (\ref{r4}) holds. So, following P. Lass\`{e}re and E.
Mazzilli, we may introduce {\em Bohr radius} associated to the family
$(K, \mathcal{D}_r, (\varphi_n)_{n=0}^\infty),$ where $K$ is a fixed
compact set, as the infimum $R_0$ of all $r$ such that the inequality
in (\ref{r4}) holds.

\section{Stein manifolds with Liouville Property}
In this section we generalize Theorem~\ref{thm1} to  Stein manifolds
$\mathcal{M} $ without nontrivial global bounded analytic functions.

\begin{Definition}
\label{LP} We say that a Stein manifold $\mathcal{M} $ has Liouville
Property if every bounded analytic function on $\mathcal{M} $ is a
constant.
\end{Definition}

Next we introduce another property that is crucial for our
considerations.
\begin{Definition}
\label{SP} We say that a Stein manifold $\mathcal{M} $ has Schwarz
Property if for every  $z_0 \in \mathcal{M},$ compact $K \ni z_0 $
and $\delta >0 $  there is a compact $K_1 $ with
\begin{equation}
\label{sch}     |f-f(z_0)|_K \leq \delta \, |f-f(z_0)|_{K_1} \quad
\forall f \in H (K_1),
\end{equation}
where $H(K_1)$ denotes the set of functions analytic in a
neighborhood of $K_1.$
\end{Definition}

 It turns out that Schwarz Property and Liouville Property are
equivalent. In order to prove this fact we introduce yet another
property.

Let $\mathcal{M} $ be a Stein manifold. Fix a point  $z_0 \in
\mathcal{M},$ and choose an exhaustion $(D_n)_{n\in\mathbb{N}}$ of
$\mathcal{M} $ consisting of open relatively compact sets so that
\begin{equation}
\label{dn} z_0 \in D_1, \quad \overline{D_n} \subset D_{n+1} \quad
\forall n \in \mathbb{N}.
\end{equation}
For each $n,$  consider the family of functions
\begin{equation}
\label{fn} \mathcal{F}_n = \{f \in H(D_n): \; f(z_0)=0, \; \sup_{D_n}
|f(z)|\leq 1\},
\end{equation}
and set
\begin{equation}
\label{gamma} \gamma_n (z) = \sup \{|f(z)|: \; f \in \mathcal{F}_n\},
\quad z\in D_n, \quad n \in \mathbb{N}.
\end{equation}
The functions $\gamma_n (z) $ are continuous. Indeed, fix $n\in
\mathbb{N},$  $w \in D_n $ and $\varepsilon >0. $  The family of
functions $\mathcal{F}_n $ is compact, and therefore, equicontinuous
on $D_n.$   Therefore, there is $\delta >0$  such that if $z\in D_n $
and  $dist (z, w) < \delta $ then
$$\left | \,|f(z)|-|f(w)|\, \right |\leq |f(z)-f(w)| < \varepsilon
\quad \forall f \in \mathcal{F}_n.$$ Thus we obtain, for every fixed
$z \in D_n $ with $dist (z, w) < \delta, $
$$
|f(z)| \leq |f(w)| + \varepsilon \quad \forall f \in \mathcal{F}_n
\Rightarrow |f(z)| \leq \gamma_n (w) + \varepsilon \quad \forall f
\in \mathcal{F}_n,
$$
which implies that
$$
\gamma_n (z) \leq \gamma_n (w) + \varepsilon.
$$
On the other hand, we have also that if $dist (z, w) < \delta $ then
$$
|f(w)| \leq |f(z)| + \varepsilon \quad \forall f \in \mathcal{F}_n
\Rightarrow |f(w)| \leq \gamma_n (z) + \varepsilon \quad \forall f
\in \mathcal{F}_n,
$$
which implies that
$$
\gamma_n (w) \leq \gamma_n (z) + \varepsilon.
$$
Hence we obtain that $|\gamma_n (z)- \gamma_n (w)|< \varepsilon $ if
$z \in D_n $ and  $dist (z, w) < \delta, $  i.e., $\gamma_n $ is
continuous at $w.$

Let us note that
\begin{equation}
\label{gg1} \gamma_n (z) \geq \gamma_{n+1} (z)  \quad \forall z\in
D_n, \;\;  n\in \mathbb{N}.
\end{equation}

\begin{Theorem}
\label{thm} Let $\mathcal{M}$ be a Stein manifold. The following
conditions on $\mathcal{M}$ are equivalent:

(i) $\mathcal{M}$ has Liouville Property;

(ii) $\mathcal{M}$ has Schwarz Property;

(iii) For every exhaustion $(D_n)$ of $\mathcal{M}$ of the kind
(\ref{dn}) the corresponding functions (\ref{gamma}) satisfy $\lim
\gamma_n (z) = 0.$
\end{Theorem}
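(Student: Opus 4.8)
The plan is to prove the chain of implications $(iii)\Rightarrow(ii)\Rightarrow(i)\Rightarrow(iii)$. The implication $(ii)\Rightarrow(i)$ is the easiest: if $\mathcal M$ has the Schwarz Property and $f$ is a bounded analytic function on $\mathcal M$, fix $z_0$ and a compact neighborhood $K$ of $z_0$; applying \eqref{sch} with any $\delta>0$ and an arbitrary compact $K_1\supset K$ gives $|f-f(z_0)|_K\leq \delta\,|f-f(z_0)|_{K_1}\leq 2\delta\sup_{\mathcal M}|f|$, and letting $\delta\to 0$ shows $f\equiv f(z_0)$ on $K$, hence on $\mathcal M$ by the identity theorem (note $\mathcal M$ is connected as part of the standing Stein-manifold convention). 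For $(i)\Rightarrow(iii)$, suppose $\mathcal M$ has the Liouville Property and let $(D_n)$ be an exhaustion of type \eqref{dn}. By \eqref{gg1} the sequence $\gamma_n(z)$ is nonincreasing on each fixed $z$, so $g(z):=\lim_n\gamma_n(z)$ exists pointwise. The idea is to extract, by a normal-families/diagonal argument, for each $n$ a function $f_n\in\mathcal F_n$ nearly attaining $\gamma_n$ at a prescribed point, and pass to a limit to produce a bounded analytic function on all of $\mathcal M$; more cleanly, one shows $g$ itself is (locally) the modulus-upper-envelope of a bounded family and hence, via the Liouville Property applied to suitable limit functions, that $g\equiv 0$. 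Concretely: if $g(w)>0$ for some $w$, choose $f_n\in\mathcal F_n$ with $|f_n(w)|\geq \gamma_n(w)-1/n\geq g(w)-1/n$; each $f_n$ is bounded by $1$ on $D_n$, so on any fixed $D_m$ the tail $(f_n)_{n\geq m}$ is a normal family, and a diagonal subsequence converges locally uniformly on $\mathcal M$ to some $f\in H(\mathcal M)$ with $\sup_{\mathcal M}|f|\leq 1$, $f(z_0)=0$, and $|f(w)|\geq g(w)>0$ — contradicting the Liouville Property. Hence $g\equiv 0$.

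The substantive implication is $(iii)\Rightarrow(ii)$, and this is where the real work lies. Assume $\gamma_n\to 0$ pointwise (equivalently, by Dini's theorem, locally uniformly, since the $\gamma_n$ are continuous and decreasing with continuous limit $0$). Fix $z_0$, a compact $K\ni z_0$, and $\delta>0$. Since $K$ is compact and $\gamma_n\to 0$ uniformly on $K$, there is an index $N$ with $\gamma_N(z)\leq\delta$ for all $z\in K$. Set $K_1:=\overline{D_N}$. The claim is that \eqref{sch} holds with this $K_1$. Let $f$ be analytic in a neighborhood of $K_1$; we may assume $f(z_0)=0$ (replace $f$ by $f-f(z_0)$) and, after scaling, that $|f|_{K_1}\leq 1$, i.e. $\sup_{D_N}|f(z)|\leq 1$ — here one must be slightly careful: $f$ is analytic in a neighborhood of $\overline{D_N}$, so $\sup_{\overline{D_N}}|f|=\sup_{D_N}|f|$ and $f\in\mathcal F_N$ after this normalization. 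Then by the very definition \eqref{gamma} of $\gamma_N$, for every $z\in D_N$ we have $|f(z)|\leq\gamma_N(z)$, hence $|f|_K\leq\sup_{z\in K}\gamma_N(z)\leq\delta=\delta\,|f|_{K_1}$. Undoing the normalization gives $|f-f(z_0)|_K\leq\delta\,|f-f(z_0)|_{K_1}$ for all $f\in H(K_1)$, which is exactly the Schwarz Property.

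I expect the main obstacle to be technical bookkeeping rather than a deep difficulty: namely, verifying that the Schwarz Property as stated — quantified over \emph{all} compact $K\ni z_0$ and \emph{all} $\delta>0$, with $K_1$ depending on both — really does follow from a single well-chosen exhaustion, and that the passage between "analytic in a neighborhood of $\overline{D_N}$" and membership in $\mathcal F_N$ is legitimate (it is, since $\overline{D_N}$ is compact and $\sup$ over $\overline{D_N}$ equals $\sup$ over the open set $D_N$ by continuity of $|f|$). A secondary point requiring care is the use of Dini's theorem in $(iii)\Rightarrow(ii)$ and, dually, the normal-families extraction in $(i)\Rightarrow(iii)$: one should make sure the limit function genuinely lives on all of $\mathcal M$ and not merely on some $D_m$, which is why the diagonal argument over the exhaustion is needed. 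None of these is serious, but they are the places where a careless write-up would go wrong.
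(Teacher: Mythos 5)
Your proof is correct and follows essentially the same route as the paper's: the same cycle of implications, with the normal-families/diagonal extraction for $(i)\Rightarrow(iii)$, Dini's theorem plus the definition of $\gamma_N$ (taking $K_1=\overline{D_N}$) for $(iii)\Rightarrow(ii)$, and the direct bounded-function argument for $(ii)\Rightarrow(i)$. The only slip is a wording one in $(ii)\Rightarrow(i)$: you speak of applying (\ref{sch}) with ``an arbitrary compact $K_1\supset K$,'' whereas the Schwarz Property only supplies \emph{some} $K_1$ for each $\delta$ --- but since your estimate $|f-f(z_0)|_{K_1}\leq 2\delta\sup_{\mathcal M}|f|$ is independent of which $K_1$ is produced, the argument is unaffected.
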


\begin{proof}
First we show that (i) implies (iii). Assume the contrary, that (i)
holds but (iii) fails. Then there is an exhaustion $(D_n)$ of
$\mathcal{M}$ of the kind (\ref{dn}) and a point $w \in \mathcal{M} $
such that the corresponding functions (\ref{gamma}) satisfy $\lim
\gamma_n (w) = c >0,$ so $\gamma_n (w) > c/2 $ for large enough $n,$
say $n\geq n_0.$ In view of the definition of $\gamma_n (w) $ (see
\ref{fn}) and (\ref{gamma})),  there is a sequence of functions
$(f_n)_{n\geq n_0}$  such that $f_n \in \mathcal{F}_n$ and $|f_n (w)|
> c/2.$ For every fixed $k$ the sequence $(f_n)$ is uniformly bounded
by $1 $ on $\overline{D}_k $ for $n>k.$ Now the Montel property and a
standard diagonal argument show that there is a subsequence
$f_{n_\nu}$ such that $|f_{n_\nu}(z)| \leq 1$ for $z\in D_{n_\nu},$
$f_{n_\nu} (z_0)=0, \; f_{n_\nu} (w)>c/2,$ and a tail of $f_{n_\nu}$
converges uniformly on every compact subset of $\mathcal{M}.$  The
limit function $f(z) = \lim f_{n_\nu} (z)$  will be bounded by $1$
and $f(z_0)=0, \; |f(w)|>c/2>0. $ The existence of such a function
contradicts (i). Hence $(i) \Rightarrow (iii). $

Next we prove that $(iii) \Rightarrow (ii).$  Indeed, fix a compact
$K$ and choose $m\in \mathbb{N}$ so that $D_m \supset K.$ By Dini's
theorem, $(\gamma_n (z))_{n > m} $ converges uniformly to $0$ on
$\overline{D}_m.$ Therefore, for every $\delta >0 $ there is $n_1 >m$
such that $\gamma_{n_1} (z) < \delta   $ for $z \in D_{m}.$  But this
means that (\ref{sch}) holds with $K_1 = \overline{D}_{n_1}.$

Finally, we show that (ii) implies (i). Let $f$ be a bounded analytic
function on $\mathcal{M}, $ say $|f(z)| \leq C.$ Fix $z\in
\mathcal{M}, \; z\neq z_0.$  By Schwartz Property, it follows that
for every $\delta >0 $ we have $|f(z) - f(z_0)| < 2C \,\delta, $ so
$f(z) = f(z_0).$  Hence $f = const, $ which proves that (i) holds.
\end{proof}

Next we consider a  generalization of the classical
Borel-Carath\'eodory theorem.

\begin{Theorem}
\label{BK} Let $\mathcal{M} $ be a Stein manifold with Liouville
Property. Then for every $z_0 \in \mathcal{M},$   compact $K \ni z_0$
and $\varepsilon > 0 $ there is a compact $K_1 $ with
\begin{equation}
\label{bk}     |f-f(z_0)|_K \leq \varepsilon  \, \sup_{K_1} Re\,(f(z)
-f(z_0)) \quad \forall f \in H (K_1).
\end{equation}
\end{Theorem}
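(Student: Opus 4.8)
The plan is to reduce the claimed Borel--Carath\'eodory type inequality \eqref{bk} to the Schwarz Property \eqref{sch}, which is available because $\mathcal{M}$ has Liouville Property (Theorem~\ref{thm}, $(i)\Rightarrow(ii)$). The classical Borel--Carath\'eodory theorem on $\mathbb{C}$ says that the sup of $|f-f(z_0)|$ on a disc is controlled by the sup of $\operatorname{Re}(f-f(z_0))$ on a slightly larger disc, and the standard trick for proving it is to pass to the function $g = (f-f(z_0))/(2A - (f-f(z_0)))$, where $A$ is an upper bound for $\operatorname{Re}(f-f(z_0))$ on the larger set; this $g$ maps into the unit disc and vanishes at $z_0$, so a Schwarz-type estimate applies to $g$, and one then unwinds the M\"obius change of variable. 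I would imitate exactly this on a Stein manifold, using \eqref{sch} in place of the one-variable Schwarz lemma.

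Concretely, first I would fix $z_0$, a compact $K\ni z_0$ and $\varepsilon>0$. Apply the Schwarz Property with $\delta = \varepsilon/4$ (say) to obtain a compact $K_1\supset K$ such that $|h-h(z_0)|_K \le \tfrac{\varepsilon}{4}|h-h(z_0)|_{K_1}$ for all $h\in H(K_1)$; I may also arrange $K_1$ to be a compact neighborhood of $K$, or enlarge it slightly, which is harmless. Now take any $f\in H(K_1)$. Set $A = \sup_{K_1}\operatorname{Re}(f(z)-f(z_0))$. If $A=0$ then $\operatorname{Re}(f-f(z_0))\le 0$ on the connected set $K_1$; together with $f(z_0)-f(z_0)=0$ this forces $f-f(z_0)\equiv 0$ near $K_1$ (an analytic function with nonpositive real part attaining the value $0$ is constant), so both sides of \eqref{bk} vanish and there is nothing to prove. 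So assume $A>0$ and put $g = \dfrac{f-f(z_0)}{\,2A - (f-f(z_0))\,}$. Since $\operatorname{Re}(f-f(z_0))\le A$ on $K_1$, a direct computation shows $|g|\le 1$ on $K_1$ (the standard half-plane-to-disc estimate: if $\operatorname{Re} w\le A$ then $|w|\le |2A-w|$), and $g(z_0)=0$; moreover $g$ is holomorphic in a neighborhood of $K_1$ because $2A - (f-f(z_0))$ has strictly positive real part there and hence no zeros. Applying \eqref{sch} to $g$ gives $|g|_K = |g-g(z_0)|_K \le \tfrac{\varepsilon}{4}|g-g(z_0)|_{K_1} \le \tfrac{\varepsilon}{4}$.

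Finally I would solve for $f-f(z_0)$ in terms of $g$: from $g(2A-(f-f(z_0))) = f-f(z_0)$ we get $f-f(z_0) = \dfrac{2A\,g}{1+g}$, so on $K$, where $|g|\le \varepsilon/4 \le 1/2$, we have $|f-f(z_0)| \le \dfrac{2A\,|g|}{1-|g|} \le \dfrac{2A\cdot(\varepsilon/4)}{1-(\varepsilon/4)} \le \varepsilon A = \varepsilon\,\sup_{K_1}\operatorname{Re}(f(z)-f(z_0))$, provided $\varepsilon$ is small enough (say $\varepsilon\le 1$) so that $1-\varepsilon/4\ge 1/2$ and $\tfrac{2(\varepsilon/4)}{1-(\varepsilon/4)}\le\varepsilon$; for large $\varepsilon$ the statement is only weaker, so one may assume $\varepsilon$ small from the start. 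This yields \eqref{bk}. I do not expect a serious obstacle: the only points requiring care are (a) checking that $g$ is genuinely holomorphic on a \emph{neighborhood} of $K_1$ so that \eqref{sch} is applicable — this is where positivity of $\operatorname{Re}(2A-(f-f(z_0)))$ on $K_1$ is used, and one may need to shrink the neighborhood of $K_1$ on which $f$ is holomorphic — and (b) the degenerate case $A=0$, handled above by connectedness of $K_1$ (which one should ensure when choosing the exhaustion, or replace $K_1$ by its connected component containing $z_0$, or simply by a connected compact neighborhood). Matching the constants so that the final bound comes out with exactly the prefactor $\varepsilon$ rather than some $C\varepsilon$ is just a matter of choosing $\delta$ appropriately in the appeal to \eqref{sch}.
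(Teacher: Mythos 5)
Your proof is correct and takes essentially the same route as the paper's: both reduce \eqref{bk} to the Schwarz Property \eqref{sch} via the M\"obius substitution $g=(f-f(z_0))/(2A-(f-f(z_0)))$ with $A=\sup_{K_1}\operatorname{Re}(f-f(z_0))$, apply \eqref{sch} to $g$, and unwind. The only differences are cosmetic: the paper picks $\delta=\varepsilon/(2+\varepsilon)$ so that $\tfrac{2\delta}{1-\delta}=\varepsilon$ exactly for every $\varepsilon$ (avoiding your smallness assumption on $\varepsilon$), and your explicit treatment of the degenerate case $A=0$ addresses a point the paper passes over silently.
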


\begin{proof}
Fix $z_0 \in \mathcal{M},$ compact $K \ni z_0$ and $\varepsilon > 0.
$ Let
$$
\delta= \varepsilon/(2+\varepsilon),
$$
and let  $K_1\subset \mathcal{M} $ be a compact  such that
(\ref{sch}) holds.

Fix a non-constant function $f \in H (K_1)$ such that $f(z_0) =0,$
and set
\begin{equation}
\label{g1} A=\sup_{K_1} Re\,f(z).
\end{equation}

Set
$$
g (z) = f(z)/(2A-f (z));
$$
then $g \in H(K_1), \; g(z_0)=0,$ and in view of (\ref{g1}) it
follows that
$$
  |g (z)| \leq 1 \quad \text{for} \quad z\in K_1.
$$
Therefore, by (\ref{sch}),
$$
|g (z)| = \left |\frac{f(z)}{2A-f(z)}\right | \leq \delta \quad
\text{if} \;\;z\in K,
$$
so $ |f(z)| \leq \delta|2 A- f(z)|\leq 2\delta A + \delta |f(z|, $
and it follows that
$$
|f(z)| \leq  \frac{2\delta}{1-\delta}\,A = \varepsilon\,A, \quad z\in
K.
$$
Hence, we obtain
$$
|f(z)| \leq \varepsilon \sup_{K_1} Re\,(f(z)) \quad \forall z \in K,
$$
which completes the proof.
\end{proof}

\begin{Theorem}
\label{thm2} Let $\mathcal{M}$ be a Stein manifold with Liouville
Property. Suppose $(\varphi_n)_{n=0}^\infty$ is a basis in
$H(\mathcal{M})$ such that
$$
\varphi_0 (z) \equiv 1.
$$
Then for every compact $K\subset \mathcal{M}$ there is a compact $K_1
$ such that
\begin{equation}
\label{g2} \text{if} \;\;  f=\sum_0^\infty f_n \varphi_n, \;\;
\text{then} \;\; \sum_0^\infty |f_n | |\varphi_n |_K \; \leq \;
|f|_{K_1} \quad \forall f \in H(\mathcal{M}).
\end{equation}

\end{Theorem}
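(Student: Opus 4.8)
The plan is to mimic the proof of Theorem~\ref{thm1}, replacing the Euclidean balls $B(r)$ by an arbitrary exhaustion and replacing the classical Liouville and Borel--Carath\'eodory theorems by their manifold analogues established above, namely Theorem~\ref{thm} and Theorem~\ref{BK}. The necessity part carries over verbatim: if $1=\sum c_n\varphi_n$ then some $c_{n_0}\neq 0$, and applying (\ref{g2}) to $f\equiv 1$ forces $\varphi_{n_0}$ to be bounded, hence constant by the Liouville Property; so one of the $\varphi_n$ is automatically constant under the hypothesis and there is nothing to assume beyond $\varphi_0\equiv 1$. The bulk of the work is the sufficiency direction, which I would organize exactly as in Theorem~\ref{thm1}.

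First I would fix $z_0\in\mathcal{M}$ with $\varphi_0\equiv 1$ and a compact $K$, and pass to the auxiliary basis $\psi_0\equiv 1$, $\psi_n=\varphi_n-\varphi_n(z_0)$, which is again a basis in $H(\mathcal{M})$ for the same reason as in (\ref{e4}). The analogue of (\ref{e5}) is the first main step: for every compact $K$ there is a compact $K'$ such that for $f=f(z_0)+\sum_{n\geq 1}f_n\psi_n$ one has $|f(z_0)|+\sum_{n\geq 1}|f_n|\,|\psi_n|_K\leq |f|_{K'}$. I would prove this by combining Proposition~\ref{abs} (which gives $C>0$ and a compact $L\supset K$ with $\sum_{n\geq 1}|f_n|\,|\psi_n|_K\leq C|f-f(z_0)|_L$, after normalizing so that $f(z_0)\geq 0$) with the generalized Borel--Carath\'eodory inequality Theorem~\ref{BK}, applied with $\varepsilon$ chosen so small that $C\varepsilon\leq 1$: this yields $|f-f(z_0)|_L\leq \varepsilon\,\sup_{K'} \mathrm{Re}(f(z)-f(z_0))$ for a suitable compact $K'$, and then $f(z_0)+\sum_{n\geq1}|f_n|\,|\psi_n|_K\leq f(z_0)+\sup_{K'}\mathrm{Re}(f(z)-f(z_0))=\sup_{K'}\mathrm{Re}\,f(z)\leq |f|_{K'}$.

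The second main step is to deduce (\ref{g2}) for the original basis $(\varphi_n)$ from the $\psi$-version, which is where the Schwarz Property (Theorem~\ref{thm}(ii)) replaces the Schwarz Lemma used in (\ref{e8})--(\ref{e9}). Writing $|f_0|\leq |f(z_0)|+\sum_{n\geq1}|f_n|\,|\varphi_n(z_0)|$ from $\varphi_0\equiv1$, and $|\varphi_n|_K\leq |\psi_n|_K+|\varphi_n(z_0)|$, I get
$$
\sum_{n=0}^\infty |f_n|\,|\varphi_n|_K\leq |f(z_0)|+\sum_{n=1}^\infty |f_n|\,|\psi_n|_K+2\sum_{n=1}^\infty|f_n|\,|\varphi_n(z_0)|.
$$
Now apply the Schwarz Property with $\delta=\tfrac14$ to get a compact $K''\supset K$ with $|\psi_n|_K=|\varphi_n-\varphi_n(z_0)|_K\leq \tfrac14|\varphi_n-\varphi_n(z_0)|_{K''}=\tfrac14|\psi_n|_{K''}$ for all $n$; the analogue of Lemma~\ref{lem1}, which I would extract from the same Grothendieck--Pietsch plus unboundedness argument (each $\varphi_n$, $n\geq1$, is nonconstant hence unbounded on $\mathcal{M}$ since $\mathcal{M}$ is Stein non-compact), enlarges $K''$ further so that $|\varphi_n(z_0)|\leq \tfrac14|\varphi_n|_{K''}\leq \tfrac14(|\psi_n|_{K''}+|\varphi_n(z_0)|)$, giving $|\varphi_n(z_0)|\leq \tfrac13|\psi_n|_{K''}$. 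Substituting these two estimates collapses the right-hand side to $|f(z_0)|+\sum_{n\geq1}|f_n|\,|\psi_n|_{K''}$, and one concludes by applying the $\psi$-version of the inequality (step one, with $K$ replaced by $K''$). The main obstacle, as in Theorem~\ref{thm1}, is ensuring all the enlargements of compacta are done in the right order and that the Lemma~\ref{lem1}-type statement genuinely holds on a general Stein manifold with the Liouville Property — but that reduces to nuclearity of $H(\mathcal{M})$ together with the fact that non-constant analytic functions on a non-compact Stein manifold are unbounded on suitable compacta, which follows from the Liouville Property being vacuous if $\mathcal{M}$ is compact (then $H(\mathcal{M})=\mathbb{C}$) and otherwise from the maximum principle.
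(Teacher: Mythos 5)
Your proof follows essentially the same route as the paper's: the same passage to the auxiliary basis $\psi_n=\varphi_n-\varphi_n(z_0)$, the same combination of Proposition~\ref{abs} with Theorem~\ref{BK} (with $\varepsilon=1/C$) to get the $\psi$-version of the inequality, and the same Schwarz-Property-plus-Lemma~\ref{lem1} bookkeeping to return to the $\varphi_n$ --- the paper simply compresses your second step into the remark that the argument of Theorem~\ref{thm1} carries over. One small correction: the unboundedness of each nonconstant $\varphi_n$, $n\geq 1$, does \emph{not} follow from the maximum principle on a general non-compact Stein manifold (bounded domains of holomorphy carry plenty of bounded nonconstant analytic functions); it is precisely the Liouville Property hypothesis that gives it, and that is the only point at which that hypothesis enters the generalization of Lemma~\ref{lem1}.
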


\begin{proof}
Fix a compact $K$ and a point $z_0 \in K. $  The system
\begin{equation}
\label{g4}  \psi_0 (z) \equiv 1, \quad \psi_n (z) = \varphi_n (z) -
\varphi_n (z_0), \quad n\in \mathbb{N}.
\end{equation}
is also a basis in $H(\mathcal{M}).$ Moreover, if $f=f_0 +
\sum_{n=1}^\infty f_n \varphi_n, $  then we have $f(z_0) = f_0
+\sum_{n=1}^\infty f_n \varphi_n (z_0),$ so it follows that
$$
f= f(z_0) +  \sum_1^\infty f_n \psi_n.
$$

First we are going to show that there is a compact $K_1 $ such that
\begin{equation}
\label{g5} \text{if} \;\;  f= f(z_0) +  \sum_1^\infty f_n \psi_n,
\;\; \text{then} \;\; |f(z_0)| + \sum_1^\infty |f_n | |\psi_n |_{K}
\; \leq \; |f|_{K_1} \quad \forall f \in H(\mathcal{M}).
\end{equation}
Fix  a function $f \in H(\mathcal{M}).$ We may assume without loss of
generality that $f(z_0)\geq 0$ (otherwise one may multiply $f$ by $
|f(z_0)|/f(z_0)$). By Proposition~\ref{abs}, there are $C \geq 1 $
and a compact $\tilde{K} \supset K $ (which do not depend on $f$)
such that
$$
\sum_1^\infty |f_n | |\psi_n |_{K} \; \leq \;  C
|f-f(z_0)|_{\tilde{K}}.
$$
By Theorem~\ref{BK}, with $\varepsilon =1/C, $ there is a compact
$K_1 \supset \tilde{K} $ such that (\ref{bk}) holds with
$K=\tilde{K}.$ Since $\sup_{K_1} Re\,(f(z)-f(z_0)) \leq |f|_{K_1}-
f(z_0),$ we obtain
$$
f(z_0) + \sum_1^\infty |f_n | |\psi_n |_{K} \; \leq \; f(z_0) + C
\varepsilon \,(|f|_{K_1}- f(z_0)) = |f|_{K_1},
$$
i.e., (\ref{g5}) holds.

Lemma~\ref{lem1} can be readily generalized to the case of Stein
manifolds with Liouville Property, so the same argument as in the
proof of Theorem~\ref{thm1} shows that (\ref{g5}) implies (\ref{g2}).
This completes the proof.
\end{proof}

\begin{Corollary}
Let $\mathcal{M}$ be a Stein manifold with Liouville Property. A
basis $(\varphi_n (z))_{n=0}^\infty$ in $H(\mathcal{M})$ has $GBP$ if
and only if one of the functions $\varphi_n$ is a constant.
\end{Corollary}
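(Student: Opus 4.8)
The plan is to derive this Corollary directly from Theorem~\ref{thm2} and the Liouville Property, mirroring the two halves of the proof of Theorem~\ref{thm1}. Here ``$(\varphi_n)$ has $GBP$'' is understood as: for every compact $K \subset \mathcal{M}$ there is a compact $K_1 \supset K$ for which (\ref{g2}) holds. With this reading Theorem~\ref{thm2} already asserts that a basis with $\varphi_0 \equiv 1$ has $GBP$, so the ``if'' direction only needs a normalization step and the ``only if'' direction only needs one invocation of Liouville.

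For necessity I would argue as in the first paragraph of the proof of Theorem~\ref{thm1}. Expand the constant function $1 \in H(\mathcal{M})$ as $1 = \sum c_n \varphi_n$; some coefficient is nonzero, say $c_{n_0} \neq 0$. Feeding $f \equiv 1$ into $GBP$ gives, for each compact $K$, a compact $K_1$ with $|c_{n_0}|\,|\varphi_{n_0}|_K \leq |1|_{K_1} = 1$, so $|\varphi_{n_0}|_K \leq 1/|c_{n_0}|$ for every compact $K$. Hence $\varphi_{n_0}$ is a bounded analytic function on $\mathcal{M}$, and the Liouville Property forces it to be constant.

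For sufficiency, suppose some $\varphi_{n_0}$ is constant; since no basis vector vanishes identically, $\varphi_{n_0} \equiv c$ with $c \neq 0$. I would pass to the system $(\psi_n)$ obtained by setting $\psi_0 = \varphi_{n_0}/c \equiv 1$, $\psi_{n_0} = \varphi_0$, and $\psi_n = \varphi_n$ otherwise. Because bases of nuclear Fr\'echet spaces are absolute, hence unconditional (Proposition~\ref{abs}), this permutation-and-rescaling again yields a basis of $H(\mathcal{M})$, now normalized so that $\psi_0 \equiv 1$, and Theorem~\ref{thm2} applies to $(\psi_n)$. It then remains to translate the resulting estimate back into the original basis: writing $f = \sum f_n \varphi_n = \sum g_n \psi_n$ one has $g_0 = c f_{n_0}$, $g_{n_0} = f_0$, $g_n = f_n$ otherwise, whence, using $|\psi_0|_K = 1$, $|\varphi_{n_0}|_K = |c|$ and $\psi_{n_0} = \varphi_0$,
\[
\sum_{n=0}^\infty |g_n|\,|\psi_n|_K \;=\; |c|\,|f_{n_0}| + |f_0|\,|\varphi_0|_K + \sum_{n \neq 0,\, n_0} |f_n|\,|\varphi_n|_K \;=\; \sum_{n=0}^\infty |f_n|\,|\varphi_n|_K .
\]
Since $|f|_{K_1}$ is independent of the basis used to expand $f$, the inequality (\ref{g2}) for $(\psi_n)$ becomes (\ref{g2}) for $(\varphi_n)$, i.e.\ $(\varphi_n)$ has $GBP$.

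I do not expect a real obstacle; the only step that needs a word of justification is the passage to the permuted, rescaled basis $(\psi_n)$, which rests on the unconditionality of bases in nuclear Fr\'echet spaces guaranteed by Proposition~\ref{abs}. Everything else is the elementary bookkeeping displayed above.
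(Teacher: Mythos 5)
Your proof is correct and follows essentially the same route as the paper: necessity is the Liouville argument from the first paragraph of the proof of Theorem~\ref{thm1} (with the Liouville Property of $\mathcal{M}$ replacing the classical Liouville theorem), and sufficiency is Theorem~\ref{thm2}. The only difference is that you explicitly carry out the permutation-and-rescaling needed to reduce to the normalization $\varphi_0 \equiv 1$, a bookkeeping step the paper leaves implicit; your justification of it via unconditionality of bases in nuclear Fr\'echet spaces is sound.
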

Indeed, the necessity part follows from Proposition 3.1 in
\cite{AAD01}, or as in Theorem~\ref{thm1}.\bigskip

Obviously, the class of manifolds with  Liouville Property include
$\mathbb{C}^d, \; d=1,2, \ldots. $ In order to give more examples let
us recall that a complex manifold is called {\em parabolic} if every
bounded from above plurisubharmonic function reduces to a constant.
In view of the fact that the modulus of analytic functions are
plurisubharmonic, parabolic manifolds possess  Liouville Property.
Affine algebraic manifolds, $\mathbb{C}^{d} \setminus Z(F) $  with
$Z(F)$ being the set of zeros of an entire function $F$, parabolic
Riemann surfaces \cite{Alf}, are parabolic. For more examples see
\cite{AS}.  However not every complex manifold with  Liouville
Property is parabolic -- see \cite{AS} for a simple example.

Let us note that the class of parabolic manifolds is remarkable by
the fact that the space of global analytic functions on a parabolic
manifold admits a basis (see \cite{AKT}). For a general complex
manifold $\mathcal{M}$ it is an open question as to whether or not
$H(\mathcal{M})$ possesses a basis.

\end{document}